\newtheorem{thm}{Theorem}
\newtheorem{lemm}[thm]{Lemma}
\newtheorem{prop}[thm]{Proposition}
\newcommand{\GL}{\mathrm{GL}}
\newcommand{\id}{\mathrm{id}}
\newcommand{\J}{\mathcal{J}}
\newcommand{\Mat}{\mathrm{Mat}}
\newcommand{\Z}{\mathbb{Z}}
\begin{document}
\begin{center}
  \textbf{Complementation in the Group of Units of Matrix Rings}\\ [15pt]
  Stewart Wilcox\\
\end{center}
\footnotetext{2000 \textit{Mathematics Subject Classification.} Primary 16U60}
\renewcommand{\abstractnamefont}{\normalfont\small}
\renewcommand{\abstractname}{ABSTRACT}
\begin{abstract}
Let $R$ be a ring with $1$ and $\J(R)$ its Jacobson radical. Then $1+\J(R)$ is a
normal subgroup of the group of units, $G(R)$. The existence of a complement to this
subgroup was explored in a paper by Coleman and Easdown; in particular the ring
$R=\Mat_n(\Z_{p^k})$ was considered. We prove the remaining cases to determine for
which $n$, $p$ and $k$ a complement exists in this ring.
\end{abstract}
\begin{center}\abstractnamefont{1. INTRODUCTION}\end{center}
If $R$ is a ring with $1$, let $G(R)$ denote its group of units. If $\psi:R\rightarrow S$
is a ring homomorphism which maps $1_R\mapsto1_S$, let $G(\psi):G(R)\rightarrow G(S)$
denote the corresponding group homomorphism. Denoting by $\J(R)$ the Jacobson radical of
$R$, it can be shown that $J(R)=1+\J(R)$ is a normal subgroup of $G(R)$. In \cite{CE}
results were found about the existence of a complement of $J(R)$. In particular these
were applied to partly classify the case when $R=\Mat_n(\Z_{p^k})$ for a prime $p$ and
integers $n,k\geq1$. The remaining values of $p$, $n$ and $k$ are considered in
Propositions \ref{no} and \ref{yes} to give the following results.
\begin{thm}
Let $R=\Mat_n(\Z_{p^k})$. Then $J(R)$ has a complement in $G(R)$ exactly when
\begin{itemize}
\item $k=1$, or
\item $k>1$ and $p=2$ with $n\leq 3$, or
\item $k>1$ and $p=3$ with $n\leq 2$, or
\item $k>1$ and $p>3$ with $n=1$.
\end{itemize}
\end{thm}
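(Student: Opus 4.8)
The plan is to treat the four bulleted cases by first recording the structure of the extension and then separating an existence (``yes'') analysis from a non-existence (``no'') one, as organised in Propositions \ref{no} and \ref{yes}. Since $\J(\Mat_n(\Z_{p^k}))=\Mat_n(p\Z_{p^k})$, reduction modulo $p$ gives $G(R)/J(R)\cong\GL_n(\Z_p)$, and the conjugation action makes the top quotient $J(R)/(1+\Mat_n(p^2\Z_{p^k}))$ into the adjoint module $\Mat_n(\Z_p)$. The case $k=1$ is immediate, since then $\J(R)=0$ and $J(R)$ is trivial. For $k\ge2$ the key preliminary is functoriality: the ring map $\Z_{p^k}\to\Z_{p^{k'}}$ with $k\ge k'$ induces $G(\psi)$ carrying a complement at level $k$ onto one at level $k'$ (the image surjects onto $\GL_n(\Z_p)$ and has order at most $|\GL_n(\Z_p)|$, hence is a complement). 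Thus non-existence need only be proved at $k=2$, while existence I will establish for all $k$ simultaneously.

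For the non-existence direction I would argue through a Sylow subgroup. If $H\le G(R)$ were a complement, then a Sylow $p$-subgroup $Q$ of $H$ would satisfy $Q\cap J(R)=1$ and map isomorphically onto a Sylow $p$-subgroup of $\GL_n(\Z_p)$, namely the unitriangular group $U_n(\Z_p)$; so $U_n(\Z_p)$ would have to lift to $G(\Mat_n(\Z_{p^2}))$. It therefore suffices to obstruct this lift. The cleanest case is $p>3$, $n\ge2$: writing any preimage of the transvection $1+E_{12}$ as $1+E_{12}+pY$ and expanding $(1+E_{12}+pY)^p$ modulo $p^2$, every term other than $1$ and $pE_{12}$ vanishes (the cross terms carry $\binom{p}{j}\equiv0\pmod p$ together with a factor of $p$ coming from $E_{12}^2=0$, and a short check shows the top term is divisible by $p^2$ once $p>3$). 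One is left with $1+pE_{12}\neq1$, so no preimage of $1+E_{12}$ has order $p$; since a complement must contain an order-$p$ preimage of $1+E_{12}$, none exists.

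The main obstacle is the two boundary ``no'' cases $p=3$, $n=3$ and $p=2$, $n=4$, where the single-transvection computation no longer obstructs: for $p\in\{2,3\}$ the element $1+E_{12}$ does admit order-$p$ lifts, so the obstruction is genuinely global and must come from a relation in $U_n(\Z_p)$. Here I would fix order-$p$ lifts of the generating transvections $1+E_{i,i+1}$, compute the defining commutator relations of $U_n(\Z_p)$ modulo $p^2$, and show the residual terms in $\Mat_n(\Z_p)$ cannot all be annihilated by any choice of the free parameters $Y$; equivalently, that the restriction to $U_n(\Z_p)$ of the obstruction class in $H^2(\GL_n(\Z_p),\Mat_n(\Z_p))$ is non-zero. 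For $p=3$, $n=3$ this is the statement that the Heisenberg relations $a^3=b^3=1$ with $[a,b]$ central of order $3$ cannot all be met by order-$3$ lifts, and for $p=2$, $n=4$ it is the analogous obstruction among the six transvections of $U_4(\Z_2)$. This is a bounded but delicate linear computation over $\Z_p$, and it is where the dependence of the threshold on $p$ is decided.

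For the existence direction I would produce, in each surviving case, an explicit finite subgroup of $\GL_n$ over $\Z$ or over the ring of $p$-adic integers that reduces isomorphically onto $\GL_n(\Z_p)$; reducing it modulo $p^k$ then yields a complement for every $k\ge1$ at once, because a finite subgroup meets the congruence subgroup $1+\Mat_n(p^k\Z)$, which is torsion-free for $p^k\ge3$, trivially. When $n=1$ this is just the Teichm\"uller lift, the group of $(p-1)$-st roots of unity in $\Z_{p^k}$, giving the cyclic complement $\GL_1(\Z_p)$; the remaining cases $\GL_2(\Z_2)$, $\GL_3(\Z_2)$, $\GL_2(\Z_3)$ (of orders $6$, $168$, $48$) I would handle by writing down explicit generating matrices and checking directly that they generate a group of the correct order reducing correctly modulo $p$. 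The representation-theoretic content is that the natural module of $\GL_n(\Z_p)$ lifts to characteristic $0$ in precisely these cases, which is exactly the vanishing of the obstruction class established, or refuted, in the previous paragraph.
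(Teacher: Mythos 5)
Your architecture tracks the paper's quite closely --- reduce non-existence to $k=2$ by functoriality, obstruct a lift by computing relations among generators modulo $p^2$, and prove existence by exhibiting an explicit finite lift --- and several ingredients are sound: the reduction from $k$ to $2$, the Sylow-subgroup reduction to lifting $U_n(\Z_p)$, and the order-$p$ computation on a lifted transvection that re-derives the Coleman--Easdown non-existence for $p>3$, $n\geq2$. But the proposal stops exactly where the content of the theorem lies. For the two decisive non-existence cases $(p,n)=(3,3)$ and $(2,4)$ you describe the obstruction as ``a bounded but delicate linear computation'' and assert its outcome without performing it. Nothing in your setup predicts that the answer flips between $(2,3)$ and $(2,4)$, or between $(3,2)$ and $(3,3)$, so that computation cannot be waved at: it \emph{is} the proof. (The paper's Proposition \ref{no} carries it out concretely: for $p=3$ a commuting pair $1+2E_{12}$, $1+2E_{13}$ yields contradictory values of a single matrix entry from the order-$3$ and commutation conditions; for $p=2$ two commuting involutions yield eight linear conditions mod $2$ that sum to $0=1$.) You also leave a gap for $n$ strictly above the boundary values: your obstruction lives in $\Mat_4(\Z_{2^2})$ or $\Mat_3(\Z_{3^2})$, whereas a complement for larger $n$ only hands you a lift of the corner-embedded copy of $U_4(\Z_2)$ inside $\GL_n(\Z_{2^2})$, whose matrices carry extra entries. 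You need the paper's reduction lemma --- that projecting onto the corner block is a homomorphism on the relevant subgroup because the off-corner entries are divisible by $p$, so their products vanish mod $p^2$ --- or else you must redo the obstruction computation in $\Mat_n$ for every $n$.

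The existence half has the same defect in sharper form. ``Produce an explicit finite subgroup of $\GL_3$ over the $2$-adic integers reducing isomorphically onto $\GL_3(\Z_2)$'' is a restatement of the claim, not an argument: a complement is by definition such a subgroup, and since no such lift exists for $\GL_4(\Z_2)$, its existence for $\GL_3(\Z_2)\cong\mathrm{PSL}(2,7)$ is a genuine fact requiring an actual construction. Your closing appeal to the vanishing of the obstruction class does not close the circle: the class you discuss only controls lifting from $\Z_p$ to $\Z_{p^2}$, you never establish that it vanishes in the surviving cases, and vanishing at that single level would not by itself yield a lift to $\Z_{p^k}$ for all $k$, let alone to characteristic zero. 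The paper's Proposition \ref{yes} supplies the missing object: a presentation $\langle\alpha,\beta\mid\alpha^2=\beta^3=(\alpha\beta)^7=(\alpha\beta\alpha\beta^{-1})^4=1\rangle$ of $\GL_3(\Z_2)$ together with explicit matrices over $\Z_{2^k}$, built from a Hensel root of $x^2+x+2$, which satisfy the relations and reduce correctly mod $2$. Until you exhibit such matrices and carry out the two mod-$p^2$ obstruction computations, what you have is a plausible plan rather than a proof.
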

When $k=1$ the subgroup $J(R)$ is trivial, and so complemented. Theorems 4.3 and 4.5 of
\cite{CE} can be summarised as
\begin{thm}[Coleman-Easdown]
Define $R$ as above. If $p=2$ or $3$ and $n=2$, then $J(R)$ has a complement in
$G(R)$. If $p>3$, $n\geq2$ and $k\geq2$ then $J(R)$ has no complement.
\end{thm}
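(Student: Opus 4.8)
The plan is to recast the existence of a complement as a splitting problem for the reduction homomorphism and then treat the two assertions separately. Here $\J(R)=p\Mat_n(\Z_{p^k})$, and $J(R)=1+\J(R)$ is exactly the kernel of the (surjective) reduction $\rho\colon\GL_n(\Z_{p^k})\to\GL_n(\Z_p)$ onto the residue-field group, where $\Z_p=\Z/p\Z$. Thus a complement to $J(R)$ is precisely a subgroup mapping isomorphically onto $\GL_n(\Z_p)$ under $\rho$, i.e. a splitting of
\[ 1\to J(R)\to\GL_n(\Z_{p^k})\xrightarrow{\ \rho\ }\GL_n(\Z_p)\to 1. \]
Two reductions orient everything. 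Since $|J(R)|=p^{(k-1)n^2}$ is a $p$-power while $p\mid|\GL_n(\Z_p)|$ for $n\ge2$, the substance lies in $n\ge2$ (for $n=1$ the orders are coprime and Schur--Zassenhaus applies). Secondly, for $k\ge2$ the surjection $\Z_{p^k}\to\Z_{p^2}$ induces a surjection $\GL_n(\Z_{p^k})\to\GL_n(\Z_{p^2})$ commuting with both reductions to $\GL_n(\Z_p)$, and the image of a complement is again a complement; hence non-splitting at $k=2$ forces non-splitting for all $k\ge2$, so I would prove the negative assertion at $k=2$.

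For the negative assertion ($p>3$, $n\ge2$) I work at $k=2$, where $\J(R)^2=0$, so $J(R)$ is the elementary abelian group $(\Mat_n(\Z_p),+)$ on which $\GL_n(\Z_p)$ acts by conjugation. The obstruction is a single unipotent element. Let $u=1+E_{12}$, a transvection of order $p$ in $\GL_n(\Z_p)$; any lift to $\GL_n(\Z_{p^2})$ has the form $\hat u=u+pM$, and the first-order expansion gives
\[ \hat u^{p}\equiv u^{p}+p\sum_{i=0}^{p-1}u^{\,i}Mu^{\,p-1-i}\pmod{p^2}. \]
Since $E_{12}^2=0$ one has $u^p=1+pE_{12}$, while the correction term, evaluated via $u^i\equiv1+iE_{12}\pmod p$, reduces to a combination of the power sums $\sum_i i$ and $\sum_i i^2$. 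For $p>3$ both vanish modulo $p$, so the correction is $0$ and $\hat u^{p}=1+pE_{12}\ne1$ for \emph{every} lift; hence every lift of $u$ has order $p^2$. A complement would contain a lift of $u$ of order exactly $p$, which is impossible. The same computation pinpoints the threshold: for $p=3$ the sum $\sum_i i^2\equiv2$ is nonzero, which is exactly what permits an order-$3$ lift and leaves room for a complement.

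For the positive assertion ($p\in\{2,3\}$, $n=2$) I would exhibit an explicit complement by lifting a generating set of $\GL_2(\Z_p)$ to $\GL_2(\Z_{p^k})$ while preserving the defining relations. For $p=2$, $\GL_2(\Z_2)\cong S_3$, and the integral matrices $\hat s=\fmat{0&1\\1&0}$ and $\hat r=\fmat{0&1\\-1&-1}$ have orders $2$ and $3$, satisfy $\hat s\hat r\hat s=\hat r^{-1}$, and reduce mod $2$ to a generating pair; the subgroup they generate sits inside $\GL_2(\Z)\subset\GL_2(\Z_{2^k})$ and is a complement for every $k$. The case $p=3$, where $\GL_2(\Z_3)$ has order $48$ (the binary octahedral group), is delicate because it admits no characteristic-zero model in $\GL_2(\Z)$; instead I would lift a generating set directly inside $\GL_2(\Z_{p^k})$, taking the essentially unique lifts of the prime-to-$3$ and order-$3$ generators (the kernel being a $3$-group) and then adjusting the lift of a $3$-element so that all defining relations hold modulo $p^k$.

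I expect the main obstacle to be the positive case $p=3$: with no integral realisation available, one must check that a coherent system of lifts satisfying \emph{every} relation of the order-$48$ group exists in $\GL_2(\Z_{3^k})$ simultaneously for all $k$, which is where a careful choice (rather than any naive lift) is forced. By contrast, once the unipotent obstruction and the role of $\sum_i i^2$ are isolated, the negative direction is a short and self-contained computation.
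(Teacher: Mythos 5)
First, a point of orientation: the paper does not prove this statement at all --- it is quoted from Coleman and Easdown \cite{CE} (their Theorems 4.3 and 4.5) --- so your proposal can only be measured against the techniques the paper deploys for its own Propositions \ref{no} and \ref{yes}. Your framing as a splitting problem for $\GL_n(\Z_{p^k})\to\GL_n(\Z_p)$ and the reduction from general $k\geq2$ to $k=2$ coincide with the paper's Section 1 discussion and its Lemma. Your nonexistence argument for $p>3$, $n\geq2$ is correct and complete: the expansion $\hat u^{p}\equiv u^{p}+p\sum_{i}u^{i}Mu^{p-1-i}\pmod{p^2}$, the exact identity $u^{i}=1+iE_{12}$, and the vanishing of $\sum_{i=0}^{p-1}i$ and $\sum_{i=0}^{p-1}i^{2}$ modulo $p$ for $p>3$ do show that every lift of the transvection $1+E_{12}$ has order $p^{2}$, which no complement can tolerate. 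This is uniform in $n$, more conceptual than the paper's entry-by-entry contradictions extracted from commuting pairs, and it cleanly explains why $p=2,3$ are the exceptional primes. The explicit copy of $S_3$ inside $\GL_2(\Z)$ likewise settles $p=2$, $n=2$ completely for all $k$.

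The genuine gap is the existence claim for $p=3$, $n=2$, which you only sketch and yourself flag as the main obstacle. ``Taking the essentially unique lifts of the prime-to-$3$ generators and then adjusting the lift of a $3$-element'' is a plan, not a proof: nothing in the proposal verifies that the relations of a presentation of the order-$48$ group $\GL_2(\Z_3)$ can be satisfied simultaneously by lifts in $\GL_2(\Z_{3^k})$, and that verification is exactly where the analogous existence proofs do their real work --- compare the paper's Proposition \ref{yes}, which for $\GL_3(\Z_2)$ writes down a finite presentation, produces explicit generator lifts whose entries come from Hensel's lemma, and checks every relation. Until the corresponding computation is carried out for $\GL_2(\Z_{3^k})$, the positive half of the theorem for $p=3$ remains unproven. (A minor factual slip, not load-bearing: $\GL_2(\Z_3)$ is not the binary octahedral group; it is the other double cover of $S_4$, the one containing non-central involutions such as $\mathrm{diag}(1,-1)$, whereas the binary octahedral group has a unique element of order $2$.)
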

It is well known (see, for example, Theorem 11.05 of \cite{mac}) that there exists
$a\in\Z_{p^k}$ with order $p-1$. The subgroup generated by $a$ then complements
$1+p\Z_{p^k}$ in $G(\Z_{p^k})$, so a complement always exists when $n=1$. Thus it remains
to prove existence when $p=2$ with $n=3$, and disprove existence when $p=2$ with $n\geq4$
and $p=3$ with $n\geq3$. Before proving Propositions \ref{no} and \ref{yes}, we make some
preliminary observations. Since $\Z_{p^k}$ is local, clearly $\J(\Z_{p^k})=p\Z_{p^k}$ and
$\Z_{p^k}/\J(\Z_{p^k})\cong\Z_p$. Let $\phi_k:\Z_{p^k}\twoheadrightarrow\Z_p$ be the
natural surjection. From Theorem 30.1 of \cite{SZASZ}, we have
\[
  \J(\Mat_n(S))=\Mat_n(\J(S))
\]
for any ring $S$. In particular with $R=\Mat_n(\Z_{p^k})$ as above,
\[
  \J(R)=\Mat_n(\J(\Z_{p^k}))=\Mat_n(p\Z_{p^k})
\]
so that
\[
  R/\J(R)\cong\Mat_n(\Z_p)
\]
Let $\psi_{n,k}:R\twoheadrightarrow\Mat_n(\Z_p)$ be the corresponding surjection,
which is induced by $\phi_k$ in the obvious way. Then $G(\psi_{n,k})$ is surjective
with kernel $J(R)$. Thus $J(R)$ is complemented in $G(R)$ if and only if there exists
a group homomorphism $\theta:\GL_n(\Z_p)\rightarrow G(R)$ with
$G(\psi_{n,k})\theta=\id_{\GL_n(\Z_p)}$.
\begin{center}\abstractnamefont{2. NONEXISTENCE}\end{center}
We first reduce to the case $k=2$ and $n$ minimal.
\begin{lemm}
Assume $k>1$ and let $R=\Mat_n(\Z_{p^k})$ as above. Pick any $m\leq n$. If $J(R)$ has
a complement in $G(R)$, then $J(S)$ has a complement in $G(S)$ where
$S=\Mat_m(\Z_{p^2})$.
\end{lemm}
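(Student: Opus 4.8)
The plan is to split the argument into two independent reductions, using throughout the reformulation established above: $J(\Mat_r(\Z_{p^j}))$ is complemented in $G(\Mat_r(\Z_{p^j}))$ precisely when the surjection $G(\psi_{r,j})$ admits a section, i.e.\ a homomorphism $\theta:\GL_r(\Z_p)\rightarrow G(\Mat_r(\Z_{p^j}))$ with $G(\psi_{r,j})\theta=\id$. First I would reduce the exponent from $k$ to $2$, and then reduce the matrix size from $n$ to $m$; the reason for this order is that the size reduction will exploit the identity $p^2=0$ in $\Z_{p^2}$, which is unavailable for larger exponents.

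For the exponent reduction, since $k\geq2$ there is a ring surjection $\pi:\Z_{p^k}\twoheadrightarrow\Z_{p^2}$ (reduction mod $p^2$) fixing $1$, which induces a ring surjection $\Mat_n(\Z_{p^k})\twoheadrightarrow\Mat_n(\Z_{p^2})$; I will call the latter $\pi$ as well. Because both $\psi_{n,k}$ and $\psi_{n,2}\pi$ are simply reduction mod $p$, we have $\psi_{n,k}=\psi_{n,2}\pi$, and applying the functor $G$ gives $G(\psi_{n,k})=G(\psi_{n,2})G(\pi)$. Hence if $\theta$ is a section of $G(\psi_{n,k})$, then $G(\pi)\theta$ is a homomorphism $\GL_n(\Z_p)\rightarrow G(\Mat_n(\Z_{p^2}))$ satisfying $G(\psi_{n,2})\bigl(G(\pi)\theta\bigr)=\id$, i.e.\ a section of $G(\psi_{n,2})$. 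This settles the reduction $k\mapsto 2$, and it remains to pass from $\Mat_n(\Z_{p^2})$ to $S=\Mat_m(\Z_{p^2})$.

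For the size reduction, suppose $\theta:\GL_n(\Z_p)\rightarrow G(\Mat_n(\Z_{p^2}))$ is a section of $G(\psi_{n,2})$. I would embed $\GL_m(\Z_p)$ into $\GL_n(\Z_p)$ by $A\mapsto\hat{A}=\fmat{A&0\\0&I_{n-m}}$, and for each $A$ write the unit $\theta(\hat{A})$ in block form $\fmat{P(A)&Q(A)\\U(A)&V(A)}$ with $P(A)\in\Mat_m(\Z_{p^2})$. Since $\theta(\hat A)\equiv\hat A\pmod p$, the off-diagonal blocks satisfy $Q(A)\equiv0$ and $U(A)\equiv0\pmod p$, while $P(A)\equiv A\pmod p$. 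I then propose to take the top-left corner $P:\GL_m(\Z_p)\rightarrow\Mat_m(\Z_{p^2})$ as the desired section.

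The crux --- and the step I expect to give the only real resistance --- is that this corner map is multiplicative. Expanding $\theta(\hat A)\theta(\hat B)=\theta(\widehat{AB})$ blockwise, the top-left block of the product is $P(A)P(B)+Q(A)U(B)$; but $Q(A)$ and $U(B)$ are each divisible by $p$, so their product lies in $p^2\Mat_m(\Z_{p^2})=0$, forcing $P(AB)=P(A)P(B)$. This is exactly where the prior reduction to $k=2$ is essential. Finally, $P(A)\equiv A\pmod p$ with $A$ invertible shows $\det P(A)$ is a unit of $\Z_{p^2}$, so $P(A)\in G(S)$ and $P(I_m)=I_m$; thus $P$ is a homomorphism $\GL_m(\Z_p)\rightarrow G(S)$ with $G(\psi_{m,2})P=\id$, giving the required complement for $J(S)$.
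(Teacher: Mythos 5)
Your proposal is correct and follows essentially the same route as the paper: reduce the exponent via the surjection $\Z_{p^k}\twoheadrightarrow\Z_{p^2}$, then embed $\GL_m(\Z_p)$ block-diagonally and extract the top-left corner, using $p^2=0$ in $\Z_{p^2}$ to see that the cross terms vanish and the corner map is multiplicative. The only cosmetic difference is that the paper defines the corner map on the full preimage $\psi_n^{-1}(H)$ rather than just on the image of the section, but the argument is the same.
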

\begin{proof}
Since $J(R)$ has a complement, the discussion of the previous section shows that
there exists $\theta':\GL_n(\Z_p)\rightarrow G(R)$ with
\[
  G(\psi_{n,k})\theta'=\id_{\GL_n(\Z_p)}
\]
We have a ring homomorphism $\lambda:\Z_{p^k}\twoheadrightarrow\Z_{p^2}$ satisfying
$\phi_2\lambda=\phi_k$. Then $\lambda$ induces
$\mu:\Mat_n(\Z_{p^k})\twoheadrightarrow\Mat_n(\Z_{p^2})$, which satisfies
$\psi_{n,2}\mu=\psi_{n,k}$. Thus
\[
  \id_{\GL_n(\Z_p)}=G(\psi_{n,k})\theta'=G(\psi_{n,2})G(\mu)\theta'
    =G(\psi_{n,2})\theta
\]
where $\theta=G(\mu)\theta'$. Denote $\psi_{n,2}$ by $\psi_n$. Let $H\leq\GL_n(\Z_p)$
be the subgroup consisting of all matrices of the form
\[
  \begin{pmatrix}A&0\\0&I_{n-m}\end{pmatrix}
\]
where $A\in\GL_m(\Z_p)$ and $I_{n-m}$ is the identity matrix of size $n-m$. Then
$H'=\psi_n^{-1}(H)$ contains $\theta(H)$, and consists of those invertible matrices
$A=(a_{ij})$ which satisfy $a_{ij}-\delta_{ij}\in p\Z_{p^2}$ whenever $i>m$ or $j>m$.
Pick elements $A=(a_{ij})$ and $B=(b_{ij})$ of $H'$, and assume $i,j\leq m$ but $l>m$.
Clearly $\delta_{il}=\delta_{lj}=0$ so that $a_{il},b_{lj}\in p\Z_{p^2}$. Hence
$a_{il}b_{lj}=0$, so that for $i,j\leq m$ we have
\[
  (ab)_{ij}=\sum_{l=1}^na_{il}b_{lj}=\sum_{l=1}^m a_{il}b_{lj}
\]
Thus mapping the matrix $A=(a_{ij})_{1\leq i,j\leq n}\in H'$ to $(a_{ij})_{1\leq
i,j\leq m}$ gives a homomorphism $\nu:H'\rightarrow G(\Mat_m(\Z_{p^2}))$. But there is
an obvious isomorphism $\kappa:\GL_m(\Z_p)\rightarrow H$ and this satisfies
\[
  \psi_m\nu\theta\kappa=\id_{\GL_m(\Z_p)}
\]
noting that the image of $\theta\kappa$ lies in the domain of $\nu$. Since
$\theta_1=\nu\theta\kappa$ is a homomorphism, the result follows.
\end{proof}
\begin{prop}\label{no}
Assume $k>1$ and define $R$ as above. If $p=2$ with $n\geq4$, or $p=3$ with $n\geq3$,
then $J(R)$ has no complement in $G(R)$.
\end{prop}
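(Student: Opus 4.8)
The plan is to use the preceding Lemma to reduce to two base cases, and then to obstruct a splitting by an explicit computation inside a single non-cyclic $p$-subgroup. Since a complement for $\Mat_n(\Z_{p^k})$ forces one for $\Mat_m(\Z_{p^2})$ for every $m\le n$, it suffices to rule out a complement for $S=\Mat_4(\Z_4)$ (take $m=4$ in the case $p=2$, $n\ge4$) and for $S=\Mat_3(\Z_9)$ (take $m=3$ in the case $p=3$, $n\ge3$). By the discussion of Section~1, a complement in $G(S)$ is exactly a group-theoretic section $\theta$ of the reduction $G(\psi_n)\colon\GL_n(\Z_{p^2})\to\GL_n(\Z_p)$. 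Because $(p\Z_{p^2})^2=0$, the kernel $J(S)=1+\Mat_n(p\Z_{p^2})$ is elementary abelian, additively isomorphic to $M=\Mat_n(\Z_p)$, on which $\GL_n(\Z_p)$ acts by conjugation; so the question is whether this extension of $\GL_n(\Z_p)$ by the adjoint module $M$ splits.

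A section restricts to a section over every subgroup, so it suffices to exhibit one $p$-subgroup $P\le\GL_n(\Z_p)$ over which the extension fails to split. First I would record that $P$ cannot be cyclic: any single $g$ of order $p^a$ does lift to an element of order $p^a$. Indeed, lifting $g$ to $\hat g$ over $\Z_{p^2}$ gives $\hat g^{p^a}=1+pC$ with $\bar C\in M^{\langle g\rangle}$, and replacing $\hat g$ by $\hat g(1+pW)$ alters $\bar C$ by $\sum_{j=0}^{p^a-1}g^j\bar W g^{-j}$; for the relevant $g$ this norm map hits $\bar C$, so an order-$p^a$ lift exists. (Equivalently the obstruction in the cyclic $H^2$ vanishes.) Hence the obstruction is genuinely non-cyclic, which is precisely why the threshold sits at $n=3$ for $p=3$ and at $n=4$ for $p=2$.

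For $p=3$, $n=3$ I would take $P$ to be the group of upper unitriangular $3\times3$ matrices (order $27$, exponent $3$), generated by $x=1+E_{12}$ and $y=1+E_{23}$ with central commutator $[x,y]=1+E_{13}$ of order $3$. Assuming a section $\theta$, every order-$3$ lift of $x$ has the form $1+E_{12}+3S$ with the single constraint $s_{21}\equiv2$, and likewise for $y$; but then $\theta(1+E_{13})$ is forced to equal $Z=[\theta(x),\theta(y)]$, and a direct computation gives $Z^3=I+3E_{13}\ne I$ for every admissible choice of lifts. Thus $Z$ has order $9$, whereas $\theta(1+E_{13})$ must have order $3$, a contradiction. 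For $p=2$, $n=4$ I would run the analogous argument in the unitriangular $2$-subgroup of $\GL_4(\Z_2)$, using a relation among order-$2$ lifts of transvections whose failure first appears at the fourth coordinate and hence has no counterpart in $\GL_3(\Z_2)$ (where a complement does exist).

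The step I expect to be the main obstacle is exactly this last computation: one must show that the offending coordinate of $Z^p$ (the $(1,3)$ entry modulo $p^2$ in the $p=3$ case) survives for \emph{every} admissible lift, i.e.\ that the restricted class in $H^2(P,M)$ is nonzero rather than merely some chosen cocycle being nonzero. Organising the matrix algebra so that this single entry is visibly independent of all the free parameters in the lifts is where the real work lies, and the $p=2$ case is the heavier of the two because the decisive relation is longer.
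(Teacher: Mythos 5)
Your reduction to $\Mat_4(\Z_4)$ and $\Mat_3(\Z_9)$ via the Lemma is exactly the paper's first step, and your $p=3$ argument is correct and checks out: writing an order-$3$ lift of $x=1+E_{12}$ as $1+E_{12}+3S$, the condition $\hat{x}^3=1$ does reduce to the single congruence $s_{21}\equiv 2\pmod 3$, and a direct computation shows that the $(3,1)$ entry of $Z=[\hat{x},\hat{y}]$ vanishes mod $9$ for \emph{arbitrary} lifts, whence $Z^3=1+3E_{13}\neq 1$ while $\theta([x,y])^3$ must equal $1$. This is a genuinely different obstruction from the paper's: there the two chosen elements $1+2E_{12}$ and $1+2E_{13}$ of $\GL_3(\Z_3)$ \emph{commute}, and the contradiction comes from playing the order-$3$ relation of one against commutativity, whereas you exploit the Heisenberg relation $[x,y]=1+E_{13}$ together with the order of the commutator. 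Both are short two-relation arguments; your cohomological framing (extension of $\GL_n(\Z_p)$ by the adjoint module, restriction to a $p$-subgroup) is a nice conceptual gloss, though the aside that cyclic subgroups never obstruct is only asserted and is not load-bearing.

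The gap is the $p=2$, $n=4$ case, which you leave as ``run the analogous argument\ldots using a relation among order-$2$ lifts of transvections whose failure first appears at the fourth coordinate.'' No such relation is exhibited, and finding one that works is precisely the hard part, as your own closing paragraph concedes. The difficulty is real: since the extension \emph{does} split over all of $\GL_3(\Z_2)$ (Proposition \ref{yes}), any word supported on transvections touching only three indices lies inside a block copy of $\GL_3(\Z_2)$ and its obstruction is guaranteed to vanish; in particular the literal analogue of your $p=3$ computation with $x=1+E_{12}$, $y=1+E_{23}$, $[x,y]=1+E_{13}$ cannot work. One must produce a specific relation among genuinely $4\times 4$ elements and verify that the resulting congruences are inconsistent for \emph{every} admissible lift. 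The paper does this with the two commuting involutions $1+E_{13}+E_{23}+E_{24}$ and $1+E_{14}+E_{23}$ of $\GL_4(\Z_2)$, extracting eight linear conditions mod $2$ from the relations $\alpha^2=\beta^2=1$ and $\alpha\beta=\beta\alpha$ whose sum reads $0=1$. Until you name and check such a relation, half of the proposition remains unproved.
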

\begin{proof}
By the previous Lemma, we may assume that $k=2$, and that $n=4$ when $p=2$ and $n=3$ when
$p=3$. First take the $p=3$ case, and consider the following two elements of
$\GL_3(\Z_3)$
\[
  \alpha=\begin{pmatrix}1&2&0\\0&1&0\\0&0&1\end{pmatrix}
  \hspace{10mm}\text{and}\hspace{10mm}
  \beta=\begin{pmatrix}1&0&2\\0&1&0\\0&0&1\end{pmatrix}
\]
It is easy to verify that $\alpha^3=1$ and $\alpha\beta=\beta\alpha$. Since
$\psi_3\theta=\id$ we may write
\begin{eqnarray*}
  \theta(\alpha)&=&\begin{pmatrix}
    3a+1&3b+2&3c\\3d&3e+1&3f\\3g&3h&3i+1
  \end{pmatrix}\\
  \theta(\beta)&=&\begin{pmatrix}
    3p+1&3q&3r+2\\3s&3t+1&3u\\3v&3w&3x+1
  \end{pmatrix}
\end{eqnarray*}
where all variables are integers. Then entry $(1,2)$ of $\theta(\alpha^3)=\theta(1)$
gives $d=1\pmod{3}$, while entry $(2,3)$ of $\theta(\alpha\beta)=\theta(\beta\alpha)$
gives $d=0\pmod{3}$, clearly a contradiction. Now assume $p=2$, and consider the
following two elements of $\GL_4(\Z_2)$
\[
  \alpha=\begin{pmatrix}1&0&1&0\\0&1&1&1\\0&0&1&0\\0&0&0&1\end{pmatrix}
  \hspace{10mm}\text{and}\hspace{10mm}
  \beta=\begin{pmatrix}1&0&0&1\\0&1&1&0\\0&0&1&0\\0&0&0&1\end{pmatrix}
\]
It is easy to verify that $\alpha^2=\beta^2=1$ and $\alpha\beta=\beta\alpha$. Since
$\psi_4\theta=\id$ we may write
\[
  \theta(\alpha)=\begin{pmatrix}
    2a+1&2b&2c+1&2d\\
    2e&2f+1&2g+1&2h+1\\
    2i&2j&2k+1&2l\\
    2m&2n&2o&2p+1
  \end{pmatrix}
\]
and
\[
  \theta(\beta)=\begin{pmatrix}
    2q+1&2r&2s&2t+1\\
    2u&2v+1&2w+1&2x\\
    2y&2z&2A+1&2B\\
    2C&2D&2E&2F+1
  \end{pmatrix}
\]
where again all variables are integers. After a lengthy calculation, from entries
$(1,3),\;(1,4)$ and $(2,4)$ of $\theta(\alpha^2)=1$ we obtain
\begin{eqnarray*}
  a+b+k&=&1\pmod{2}\\
  b+l&=&0\pmod{2}\\
  f+l+p&=&1\pmod{2}
\end{eqnarray*}
Similarly from entries $(1,3),\;(2,3)$ and $(2,4)$ of $\theta(\beta^2)=1$ we obtain
\begin{eqnarray*}
  E+r&=&0\pmod{2}\\
  A+v&=&1\pmod{2}\\
  B+u&=&0\pmod{2}
\end{eqnarray*}
Finally comparing entries $(1,4)$ and $(2,3)$ of
$\theta(\alpha\beta)=\theta(\beta\alpha)$,
\[
  a+B+p+r=0\pmod{2}\hspace{6mm}\text{and}\hspace{6mm}
  A+E+f+k+u+v=0\pmod{2}
\]
Summing the above $8$ equations gives $0=1\pmod{2}$, and we have the required
contradiction.
\end{proof}
\begin{center}\abstractnamefont{3. EXISTENCE}\end{center}
\begin{prop}\label{yes}
Assume $k>1$ and define $R$ as above. If $p=2$ and $n=3$ then $J(R)$ has a complement
in $G(R)$.
\end{prop}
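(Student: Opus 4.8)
By the reformulation at the end of Section~1, a complement exists precisely when there is a group homomorphism $\theta:\GL_3(\Z_2)\to G(R)=\GL_3(\Z_{2^k})$ splitting the reduction $G(\psi_{3,k})$. Now $\GL_3(\Z_2)$ is the simple group of order $168$ (isomorphic to $PSL_2(7)$), so the plan is to realise it inside $\GL_3(\Z_{2^k})$ as the reduction of an honest $3$-dimensional representation, rather than to lift a generating set by hand: the latter fails, since the obvious integer matrices lifting generators of $\GL_3(\Z_2)$ generate an infinite group over $\Z$, and $\GL_3(\Z_2)$ has no faithful $3$-dimensional representation over $\Z$ at all.

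The point is that such a representation does exist over $\Z_{2^k}$. I would first note, by Hensel's lemma, that $t^2+t+2$ has a root $\tau\in\Z_{2^k}$ (modulo $2$ it is $t(t+1)$, with nonvanishing derivative). Using $\tau$ one writes down explicit $3\times3$ matrices over $\Z_{2^k}$ for a generating set of $\GL_3(\Z_2)$ and checks the defining relations directly; conceptually these matrices are the reduction of one of the two $3$-dimensional complex irreducibles of $PSL_2(7)$, whose character field is $\mathbb{Q}(\sqrt{-7})$ and which becomes defined over $\Z_{2^k}$ exactly because $2$ splits in $\mathbb{Q}(\sqrt{-7})$ (equivalently, because $t^2+t+2$ has a $2$-adic root). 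The upshot is a homomorphism $\theta:\GL_3(\Z_2)\to\GL_3(\Z_{2^k})$ whose reduction modulo $2$ is the natural $3$-dimensional module, in particular nontrivial.

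It remains to turn $\theta$ into a genuine splitting of $\psi:=G(\psi_{3,k})$. Consider $\sigma:=\psi\theta:\GL_3(\Z_2)\to\GL_3(\Z_2)$, which is the reduction of $\theta$ modulo $2$. Since this reduction is nontrivial and $\GL_3(\Z_2)$ is simple, $\sigma$ is injective, and comparing orders shows $\sigma$ is an automorphism of $\GL_3(\Z_2)$. Replacing $\theta$ by $\theta\sigma^{-1}$, which is again a homomorphism, gives $\psi(\theta\sigma^{-1})=\sigma\sigma^{-1}=\id_{\GL_3(\Z_2)}$. Thus $\theta\sigma^{-1}$ is the required section, and $J(R)$ is complemented in $G(R)$.

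The main obstacle is the construction in the middle paragraph: producing the $3$-dimensional representation over $\Z_{2^k}$ and confirming it is well defined for every $k\geq2$. I expect the cleanest route is the explicit one, exhibiting the generators as matrices in $\tau$ and verifying the relations of a presentation of $\GL_3(\Z_2)$ by computation modulo $2^k$, with the splitting of $2$ in $\mathbb{Q}(\sqrt{-7})$ explaining structurally why such matrices exist. (One could instead avoid representation theory via Gaschütz's theorem, splitting off the elementary abelian kernel $\Mat_3(2^{j}\Z_{2^{j+1}})$ one layer at a time and reducing each splitting to finding a complement inside a Sylow $2$-subgroup, i.e.\ to lifting the dihedral group of order $8$ of upper unitriangular matrices over $\Z_2$; this needs an induction on $k$ and a separate $2$-group computation.)
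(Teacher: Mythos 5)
Your overall strategy is the paper's: reduce to finding a section of $G(\psi_{3,k})$, produce a Hensel root of $t^2+t+2$ in $\Z_{2^k}$, and lift generators of $\GL_3(\Z_2)$ to matrices in that root satisfying the relations of a presentation. Your structural gloss is a genuine addition the paper does not make --- the two $3$-dimensional irreducibles of $PSL_2(7)\cong\GL_3(\Z_2)$ have character field $\mathbb{Q}(\sqrt{-7})$, and $2$ splits there precisely because $t^2+t+2$ factors, which explains where the otherwise mysterious polynomial comes from --- and your device of correcting an arbitrary lift by the automorphism $\sigma=\psi\theta$ (injective by simplicity, bijective by an order count, then replace $\theta$ by $\theta\sigma^{-1}$) is correct and makes the argument robust. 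Your side claims (no faithful $3$-dimensional representation over $\Z$, infinite order of naive integral lifts) are also right.

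However, as you yourself flag, the decisive step is only promised, not performed: no presentation is fixed, no matrices are exhibited, and no relations are checked, so what you have is a plan rather than a proof. The paper supplies exactly this content: it takes the presentation $\langle\alpha,\beta\mid\alpha^2=\beta^3=(\alpha\beta)^7=(\alpha\beta\alpha\beta^{-1})^4=1\rangle$ of $\GL_3(\Z_2)$ and, with $a\equiv1\pmod 2$ and $a^2+a+2\equiv0\pmod{2^k}$, verifies those four relations for
\[
  \bar{\alpha}=\begin{pmatrix}1&a&-a-1\\0&-1&0\\0&0&-1\end{pmatrix},
  \qquad
  \bar{\beta}=\begin{pmatrix}0&0&1\\1&0&0\\0&1&0\end{pmatrix},
\]
which moreover reduce to the chosen generators modulo $2$, so no automorphism correction is even needed. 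If you prefer to make the representation-theoretic route self-contained instead, you would still owe three nontrivial facts: that the $3$-dimensional irreducible has Schur index $1$ (so is realizable over $\mathbb{Q}(\sqrt{-7})$), that it stabilizes a lattice over the completion at a prime above $2$ so that it descends to $\Z_{2^k}$, and that the resulting homomorphism into $\GL_3(\Z_{2^k})$ is nontrivial. Right idea, right polynomial, correct auxiliary reasoning --- but the central computation that constitutes the proof is missing.
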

\begin{proof}
The group $GL_3(\Z_2)$ has the following presentation, which can be easily verified using a standard
computer algebra package such as MAGMA \cite{MAGMA},
\[
  GL_3(\Z_2)=\langle\alpha,\beta\mid\alpha^2=\beta^3=(\alpha\beta)^7
    =(\alpha\beta\alpha\beta^{-1})^4=1\rangle
\]
where
\[
  \alpha=\begin{pmatrix}
    1&1&0\\
    0&1&0\\
    0&0&1
  \end{pmatrix}
  \hspace{10mm}\text{and}\hspace{10mm}
  \beta=\begin{pmatrix}
    0&0&1\\
    1&0&0\\
    0&1&0
  \end{pmatrix}
\]
We will construct a homomorphism $\theta:\GL_3(\Z_2)\rightarrow\GL_3(\Z_{2^k})$ such that
\begin{equation}\label{id}
  G(\psi_{3,k})\theta=\id_{\GL_3(\Z_2)}
\end{equation}
Now there exists $a$ with $a=1\pmod{2}$ and $a^2+a+2=0\pmod{2^k}$ by Hensel's Lemma. Define
$\bar{\alpha},\bar{\beta}\in\GL_3(\Z_{2^k})$ by
\[
  \bar{\alpha}=\begin{pmatrix}
    1&a&-a-1\\
    0&-1&0\\
    0&0&-1
  \end{pmatrix}
  \hspace{5mm}\text{and}\hspace{5mm}
  \bar{\beta}=\begin{pmatrix}
    0&0&1\\
    1&0&0\\
    0&1&0
  \end{pmatrix}
\]
It is easily verified using $a^2+a+2=0\pmod{2^k}$ that
\[
  \bar{\alpha}^2=1\hspace{5mm}
  \bar{\beta}^3=1\hspace{5mm}
  (\bar{\alpha}\bar{\beta})^7=1\hspace{5mm}
  (\bar{\alpha}\bar{\beta}\bar{\alpha}\bar{\beta}^{-1})^4=1
\]
We can then define $\theta$ by $\theta(\alpha)=\bar{\alpha}$ and
$\theta(\beta)=\bar{\beta}$. Then (\ref{id}) holds for $\alpha$ and $\beta$, since
$a=1\pmod{2}$. But $\alpha$ and $\beta$ generate $\GL_3(\Z_2)$, so (\ref{id}) holds.
The result then follows by the observations of Section 1.
\end{proof}
\begin{center}\abstractnamefont{ACKNOWLEDGEMENTS}\end{center}
This work was completed while the author was a recipient of a vacation scholarship
from the School of Mathematics and Statistics, University of Sydney.

\end{document}